\documentclass[11pt]{article}
\RequirePackage[OT1]{fontenc}
\RequirePackage{amsthm,amsmath}
\RequirePackage{natbib}
\RequirePackage[colorlinks,citecolor=blue,urlcolor=blue]{hyperref}
\usepackage{graphicx}
\usepackage{float}
\usepackage{placeins}
\usepackage{booktabs}
\usepackage{fullpage}
\usepackage{econometrics}

\newtheorem{theorem}{Theorem}
\newtheorem{lemma}{Lemma}
\newtheorem{corollary}{Corollary}
\newtheorem{remark}{Remark}

\newcommand{\IQR}{\text{IQR}}
\newcommand{\Var}{\text{Var}}
\newcommand{\ASV}{\text{ASV}}
\newcommand{\IF}{\text{IF}}
\newcommand{\PIF}{\text{PIF}}
\newcommand{\iqr}{\mathcal{IQR}}
\newcommand{\Q}{\mathcal{Q}}
\newcommand{\V}{\mathcal{V}}
\newcommand{\R}{\mathcal{R}}
\newcommand{\T}{\mathcal{T}}
\newcommand{\w}{\mathcal{w}}
\mathcode`w=`\w

\renewcommand\arraystretch{0.65}
\renewenvironment{proof}{{\bfseries\noindent Proof.}}{\qed}

\begin{document}

\title{Interval estimators for ratios of independent quantiles and interquantile ranges}

\author{Chandima N. P. G. Arachchige \\ Department of Mathematics and Statistics, La Trobe University \\ \href{mailto:18201070@students.latrobe.edu.au}{18201070@students.latrobe.edu.au}\\\\ Maxwell Cairns\\ Department of Mathematics and Statistics, La Trobe University \\ \href{mailto:mrcairns@students.latrobe.edu.au}{mrcairns@students.latrobe.edu.au} \\ \\ Luke A. Prendergast\\ Department of Mathematics and Statistics, La Trobe University \\ \href{mailto:luke.prendergast@latrobe.edu.au}{luke.prendergast@latrobe.edu.au}}

\maketitle

\begin{abstract}
Recent research has shown that interval estimators with good coverage properties are achievable for some functions of quantiles, even when sample sizes are not large.   Motivated by this, we consider interval estimators for the ratios of independent quantiles and interquantile ranges that will be useful when comparing location and scale for two samples.  Simulations show that the intervals have excellent coverage properties for a wide range of distributions, including those that are heavily skewed.  Examples are also considered that highlight the usefulness of using these approaches to compare location and scale. 
\end{abstract}

{\bf Keywords:} Asymptotic variance, Coverage probability, Partial influence functions

\section{Introduction}

It is common to use the t-test to compare the differences between the means of two independent populations and under the assumption of normality of those populations. However, when the distributions are skewed medians may be a more appropriate measure of location. Non-parametric alternatives to the t-test, such as tests for the difference, or ratio of, two medians are available \citep[e.g.][]{price2002distribution}. Similarly, when comparing the spread of two populations, normality is again often assumed and the standard F-test employed based on the ratio of two independent sample variances.  However, it has long been known that the F-test can be unreliable when normality is violated \citep[see, e.g.][]{brown1974robust}.  For a recent discussion see \cite{hosken2018beware} who advise ``do not use F tests to compare variances''.   As a non-parametric alternative, \cite{shoemaker1999interquartile} introduced a test using differences in \textit{interquantile ranges}.  They found that the test is reliable for many distributions, including those are heavily skewed when location is known or unknown.

In this paper, we propose interval estimators for ratios of quantiles and interquantile ranges which are scale-free and thus easily interpretable.  We begin by detailing some related existing methods in Section 2 which also allows us to introduce notations used throughout.  In Section 3 we obtain partial influence functions for ratios of independent quantile and interquantile range estimators.  Partial influence functions, which can be used to study robustness properties, can also be used to obtain asymptotic variances of estimators.  It is the latter which is our main focus and the asymptotic variances are used to construct interval estimators in Section 4.  We conduct simulations and then provide some examples in Section 5.

\section{Notations and related existing methods}\label{sect:Notations and related existing methods}

Let $F_1$ denote the distribution function for random variable $X$ and $f_1$ denote the density. For a $p \in [0,\ 1]$, let the $p$th quantile be $x_p = F_1^{-1}(p)=\inf \{x:F_1(x) \geq p\}$.  Also, let $g_1(p)=1/f_1\left(x_p\right)$ denote the \textit{quantile density function} \citep{tukey1965,parzen1979nonparametric} and its reciprocal,  which we denote $q_1(p)=f_1\left(x_p\right)$, is the \textit{density quantile function} . Similarly, let $F_2$ denote the distribution function for random variable $Y$ with $y_p = F_2^{-1}(p)=\inf \{y:F_2(y) \geq p\}$, $g_2(p)=1/f_2\left[y_p\right]$ and $q_2(p)=f_2\left(y_p\right)$.  Also let $X_1,\ldots,X_{n_1}$ and $Y_1,\ldots,Y_{n_2}$ denote simple random samples of size $n_1$ and $n_2$ from $F_1$ and $F_2$ respectively.  

\subsection{\textit{The Price and Bonett method}} \label{sect:BP method}

\cite{price2002distribution} proposed an asymptotic confidence interval for a ratio of medians which does not require identically shaped distributions. Let $X_{(1)} \leq X_{(2)} \leq \ldots \leq X_{(n_1)}$ and $Y_{(1)} \leq Y_{(2)} \leq  \ldots \leq Y_{(n_2)}$ be the ordered random samples. Let $\widehat{\eta}_1$ and $\widehat{\eta}_2$ be the usual sample medians obtained from each sample which are estimators of $\eta_1$ and $\eta_2$. Let $X^*_{(i)}=\ln X_{(i)}$ and  $Y^*_{(i)}=\ln Y_{(i)}$, assuming $X_{(i)}$ and $Y_{(i)}$ are both non-negative and let $\widehat{\eta}^*_1$ and $\widehat{\eta}^*_2$ denote the sample medians of these log-transformed samples. An asymptotic distribution-free confidence interval for $\eta_1/\eta_2$ defined as,
\begin{align*}
\left(\frac{\hat{\eta}_1}{\hat{\eta}_2}\right) \exp\left\{\pm Z_{\alpha/2}\sqrt{\text{Var}(\hat{\eta}^*_1)+\text{Var}(\hat{\eta}^*_2)}\right\}
\end{align*}
where $\text{Var}(\hat{\eta}_j^*)$ $(j=1,2)$ is the variance of $\hat{\eta}_j^*$.  The \citep{price2001estimating} modification of the McKean-Schrader estimator \cite{mckean1984comparison} is used where 
\begin{align*}
\text{Var}(\hat{\eta}_1^*) = \left(\frac{(X^*_{(n_1-c_1+1)}- X^*_{(c_1)})}{2z_1}\right)^2
\end{align*}
where $c_1 = (n_1+1)/2 - n_1^{1/2}$, rounded to the nearest integer, $z_1 =\Phi^{-1} (1-p_1/2)$ and $p_1=\sum_{i=0}^{c_1-1}\left[n_1!/i!(n_1-i)!\right](0.5)^{(n_1-i)}$. $\text{Var}(\hat{\eta}_2^*)$ is similarly defined.

\subsection{\textit{Shoemaker's test}} \label{sect:shoemaker}

For $p\in (0, 0.5)$, the interquantile range is denoted $ \IQR_p(X)=x_{1-p} - x_p $ which is the usual \textit{interquartile range} when $p=0.25$.   Let $\widehat{x}_p$ denote the estimator of $x_p$.  \cite{shoemaker1995tests} uses the influence function \citep{hampel1974influence} to calculate the asymptotic variance of the $\IQR_p(X)$ estimators.  We will provide more detail on the influence function in the next section.  Using our notations for the density quantile function at the start of this section, the asymptotic variance for $\IQR_p(X)$ estimator is $\omega_1^2=p\left\{q_1(p)+q_1(1-p)-p[q_1(p)+q_1(1-p)]^2\right\}/[q_1^2(p)q_1^2(1-p)]$.  Hence, for $\omega_2$ denote the asymptotic variance for the estimator of $\IQR_p(Y)$, the \cite{shoemaker1999interquartile} test statistic is
\begin{equation}\label{T}
Z=\frac{(\widehat{x}_{1-p} - \widehat{x}_{p}) - (\widehat{y}_{1-p} - \widehat{y}_{p})}{\sqrt{\omega^2_1/n_1+\omega^2_2/n_2}}.
\end{equation}
Z is asymptotically $N(0,1)$ distributed when $\IQR_p(X)=\IQR_p(Y)$ provided that $f_1$ and $f_2$ are positive and continuous for the quantiles used in the interquantile ranges.  As the estimator of the denominator for $q_1(p)$, and then similarly $q_1(1-p)$, $q_2(p)$ and $q_2(1-p)$, \cite{shoemaker1999interquartile} uses $n(\widehat{x}_p, h_n)/(2nh_n)$ where $n(\widehat{x}_p, h_n)$ is the number of observations falling in the interval $\widehat{x}_p\pm h_n$ with bandwidth $h_n=1.3s/n^{1/5}$.  Simulation results for a wide variety of distributions validate the use of this test, and certainly superiority over the $F$-test in the presence of skew, and it is suggested that one should choose $p$ between 0.1 and 0.25 for improved power.

\section{Ratios of quantiles and interquantile ranges} \label{Ratios of quantiles and interquantile ranges}

In this section we introduce the ratio estimators and ultimately derive their asymptotic variances.

\subsection{\textit{The ratio estimators}} \label{sect:ratio estimators}

We continue with the notations already introduced and assume $p\in (0,1)$. We define the population ratio of quantiles $r_p$ and associated estimator to be
\begin{equation}\label{r}
r_p= \left(\frac{x_p}{y_p}\right)\;\;\text{and}\;\; \widehat{r}_p = \left(\frac{\hat{x}_p}{\hat{y}_p}\right).
\end{equation}

Assuming $p\in (0,0.5)$, we define the population squared ratio of $\IQR_p$s and associated estimator to be
\begin{equation}\label{R}
R_p = \left[\frac{\IQR_p(X)}{\IQR_p(Y)}\right]^2\;\;\text{and}\;\; \widehat{R}_p = \left(\frac{\widehat{x}_{1-p} - \widehat{x}_p}{\widehat{y}_{1-p} - \widehat{y}_p}\right)^2.
\end{equation}

We focus on the squared ratio of IQRs since it is analogous to the ratio of variances although it is simple to obtain estimators for the ratio of IQRs by a square-root transformation. A nice property of $R_p$ is that it is equivalent to the ratio of variances for many distributions, as shown below.

\begin{lemma}\label{lemma}
Let $G(\mu,\sigma)$ be the distribution function of a location-scale family with location and scale parameters $\mu$ and $\sigma$.  If $X\sim G(\mu_1,\sigma_1)$ and $Y\sim G(\mu_2,\sigma_2)$,
$R_p= \Var(X)/\Var(Y)$ for any $p\in (0,1/2)$.
\end{lemma}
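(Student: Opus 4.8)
The plan is to exploit the defining equivariance property of a location-scale family. Let $G_0$ denote the distribution function of the standard member $G(0,1)$ and let $Z\sim G(0,1)$, so that $X\sim G(\mu_1,\sigma_1)$ admits the representation $X=\mu_1+\sigma_1 Z$ and likewise $Y=\mu_2+\sigma_2 Z'$ for an independent copy $Z'$ of $Z$. The first step I would carry out is to establish that quantiles are equivariant under the strictly increasing affine map $z\mapsto \mu_1+\sigma_1 z$ (recall $\sigma_1>0$). Using the infimum definition $x_p=\inf\{x:F_1(x)\ge p\}$ together with the relation $F_1(x)=G_0\!\left((x-\mu_1)/\sigma_1\right)$, I would conclude that $x_p=\mu_1+\sigma_1 z_p$, where $z_p$ is the $p$th quantile of the standard member; the analogous identity $y_p=\mu_2+\sigma_2 z_p$ then holds for $Y$.

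With equivariance in hand, the second step is a direct substitution into the interquantile ranges. Since the location term cancels in a difference,
\[
\IQR_p(X)=x_{1-p}-x_p=\sigma_1\,(z_{1-p}-z_p),\qquad \IQR_p(Y)=y_{1-p}-y_p=\sigma_2\,(z_{1-p}-z_p).
\]
Forming the squared ratio $R_p$ as in \eqref{R} makes the common standard-member factor $(z_{1-p}-z_p)^2$ cancel, leaving $R_p=(\sigma_1/\sigma_2)^2$. This holds for every $p\in(0,1/2)$ precisely because that cancelling factor depends only on the shared shape and not on the two populations being compared.

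The final step is to evaluate the variance ratio from the same representation: $\Var(X)=\sigma_1^2\,\Var(Z)$ and $\Var(Y)=\sigma_2^2\,\Var(Z)$, whence $\Var(X)/\Var(Y)=(\sigma_1/\sigma_2)^2$, which matches the value just obtained for $R_p$. I expect the only genuine obstacle to be bookkeeping at two points rather than any substantive difficulty. First, the quantile-equivariance claim should be argued directly from the generalized inverse, so as not to presuppose continuity or strict monotonicity of $G_0$; monotonicity and continuity of the affine map are what legitimately let it pass through the infimum. Second, the variance step tacitly requires $0<\Var(Z)<\infty$ for the ratio to be well defined, which is why the identification with the variance ratio, as opposed to the purely quantile-based identity $R_p=\sigma_1^2/\sigma_2^2$, is meaningful only for those members of the family possessing a finite, positive variance.
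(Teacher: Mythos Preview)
Your proposal is correct and follows essentially the same route as the paper's own proof: represent $X$ and $Y$ via the standard member $Z\sim G(0,1)$, use quantile equivariance to obtain $R_p=\sigma_1^2/\sigma_2^2$, and then match this with $\Var(X)/\Var(Y)=\sigma_1^2\Var(Z)/(\sigma_2^2\Var(Z))$. Your version is slightly more explicit about justifying equivariance from the generalized-inverse definition and about the finite-variance caveat, but the argument is the same.
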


\begin{proof}
The proof is obvious when noting that for $Z_1\sim G(0,1)$ and $Z_2\sim G(0,1)$, we can write $X=\sigma_1 Z_1 + \mu_1 $ and $Y=\sigma_2 Z_2 + \mu_2$ so that the quantile functions for $X$ and $Y$ may each be written $\sigma_1 Q(p) + \mu_1$ and $\sigma_2 Q(p) + \mu_2$ where $Q(p)$ is the quantile function for $G(0,1)$.  Hence, $R_p=\sigma_1^2/\sigma_2^2$.  For $Z\sim G(0,1)$, $\Var(X)=\sigma_1^2\Var(Z)$ and $\Var(Y)=\sigma_2^2\Var(Z)$ so that $R_p=\Var(X)/\Var(Y)$. 
\end{proof}

From Lemma \ref{lemma}, $R_p$ is equal to the ratio of variances when the distributions are from the same location-scale family.  This means that an estimator of $R_p$ is a direct competitor to the ratio of variances for such distributions.

\subsection{\textit{Influence functions and partial influence functions}} \label{sect:IF and PIF}

Let $F$ denote a distribution function and for $\epsilon \in [0,1]$, define the contamination distribution to be $F_{\epsilon}=(1-\epsilon)F+\epsilon \Delta_{x_0}$ where $\Delta_{x_0}$ has all of its mass at the contaminant $x_0$. Suppose that for $F$ there is a parameter of interest, $\theta$, and associated estimator with the statistical functional $T$ such that $T(F)=\theta$ and $T(F_n)=\widehat{\theta}$.  For example, for the mean parameter $\mu$, we have $\mu=T(F)=\int xdF$.  The influence function \citep[IF,][]{hampel1974influence} is then defined to be $$\IF (x_0;T,F) \equiv \lim _{\epsilon \to 0}\frac{T(F_\epsilon)-T(F)}{\epsilon} =\left. \frac {\partial}{\partial \epsilon} T( F_\epsilon)\right|_{\epsilon = 0}$$ which is the rate of change in $T$, at $F$, when a small amount of contamination is introduced.  Influence functions are therefore useful tools to understand the behavior of estimators in the presence of certain observations types, including outliers. 

Let $f$ denote the probability density function of $F$ and let $\Q_p$ denote the functional for the $p$th quantile where $\Q_p(F)=x_p$.  The influence function of the $p$th quantile is well known \cite[e.g., p.59 of ][]{staudte1990robust} to be
\begin{equation}
\IF (x_0;\Q_p,F)= \left[p-I(x_p\geq x_0)\right]g(p)\label{IFQ}
\end{equation} where $g(p)= 1/f(x_p)$ is the quantile density defined earlier \citep{parzen1979nonparametric}. 

Influence functions also exhibit useful asymptotic properties including an often convenient means to derive asymptotic variances such as those computed for the IQR by \cite{shoemaker1995tests}.  For $X\sim F$ and $F_n$ denoting the empirical distribution for $n$ iid random variables distributed $F$, under some mild regularity conditions such as differentiability of $T(F)$ and by the Central Limit Theorem we have \citep[see, e.g., page 63 of][]{staudte1990robust},
\begin{equation}
 \sqrt{n}\left[T(F_n)-T(F)\right]\adistr N\left(0, \text{ASV}(T)\right)
\end{equation}
where $\adistr$ denotes `approximately distributed as' and $\text{ASV}(T)=E\left[\IF (X;T,F)^2\right]$ is the asymptotic variance of the estimator with functional $T$.

For the quantile estimator with functional $Q_p$ and influence function given in \eqref{IFQ}, it can be shown that $E_F[\IF (X;\Q_p,F)]=0$ and 
\begin{equation}
\text{ASV}(Q_p)=E_F[\IF^2 (X;\Q_p,F)]=p(1-p)g^2(p).
\end{equation} 

In our context, we have two populations and therefore consider partial influence functions \citep[PIF,][]{pires2002partial}. We have two $\PIF$s, where contamination is introduced to each of the populations while the other population remains uncontaminated. The first $\PIF$ of the estimator functional $T$ at ($F_1,F_2$) is
\begin{align*}
\PIF_1(x_0;T,F_1,F_2) = \lim_{\epsilon \to   0}\left[\frac{T[(1-\epsilon)F_1+\epsilon\Delta_{x_0},F_2]-T(F_1,F_2)}{\epsilon}\right],  
\end{align*}
with $\PIF_2(x_0;T,F_1,F_2)$ defined similarly.  Let $F_{n_1}$ and $F_{n_2}$ denote empirical distribution functions for iid samples of size $n_1$ and $n_2$ from $F_1$ and $F_2$ then from \cite{pires2002partial} we have that
$\sqrt{n_1 + n_2}\left[T(F_{n_1}, F_{n_2})-T(F_1,F_2)\right]$
is asymptotically normal with mean zero and asymptotic variance (ASV)
\begin{equation}
  \ASV(T)=\frac{1}{w_1}E_{F_1}[\PIF_1(X;T,F_1,F_2)^2]+ \frac{1}{w_2}E_{F_2}[\PIF_2(X;T,F_1,F_2)^2]\label{asv:RPIF}
\end{equation}
where $w_i=n_i/(n_1+n_2)$ $(i=1,2)$ and $E_F(.)$ denotes expectation when $X\sim F$.

\subsubsection*{\textit{Partial Influence Functions for the ratio of quantiles}} \label{sect:PIF_rp}

Let $\mathsf{r}_p$ be the functional for ratio of quantiles so that 
$$\mathsf{r}_p(F_1,F_2)=\left[\frac{\Q_{p}(F_1)}{ \Q_{p}(F_2)}\right]=r_p$$  Recall $\Q_p(F_1)=x_p$ and $\Q_p(F_2)=y_p$ to distinguish between the populations.

\begin{theorem}\label{th:IFrp}
For $\IF (x_0;\Q_{p},F)$ is defined in \eqref{IFQ}, the partial influence functions of $\mathsf{r}_p$ for contamination introduced to each of $F_1$ and $F_2$ are
\begin{equation*}
\PIF_1(x_0;\mathsf{r}_p,F_1,F_2) = \left[\frac{\IF (x_0;\Q_p,F_1)}{y_p}\right],\;\; \PIF_2(x_0;\mathsf{r}_p,F_1,F_2) = -r_p\left[\frac{\IF (x_0;Q_p,F_2)}{y_p}\right].
 \end{equation*}
\end{theorem}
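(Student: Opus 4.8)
The plan is to compute each partial influence function directly from its definition, exploiting the fact that contaminating one population leaves the other population's quantile unchanged, so that $\mathsf{r}_p$ collapses to a smooth function of a \emph{single} varying quantile functional, to which the chain rule applies together with the known quantile influence function in \eqref{IFQ}.

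First I would handle $\PIF_1$. Writing $F_{1,\epsilon}=(1-\epsilon)F_1+\epsilon\Delta_{x_0}$ and noting that $\Q_p(F_2)=y_p$ is unaffected by contamination of $F_1$, the functional becomes $\mathsf{r}_p(F_{1,\epsilon},F_2)=\Q_p(F_{1,\epsilon})/y_p$. Since $y_p$ is constant in $\epsilon$, differentiating at $\epsilon=0$ gives $(1/y_p)\,\partial_\epsilon \Q_p(F_{1,\epsilon})\big|_{\epsilon=0}$, and the remaining derivative is by definition $\IF(x_0;\Q_p,F_1)$, which yields the first claimed expression.

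Next I would treat $\PIF_2$. Now $\Q_p(F_1)=x_p$ is fixed while $F_{2,\epsilon}=(1-\epsilon)F_2+\epsilon\Delta_{x_0}$ varies, so $\mathsf{r}_p(F_1,F_{2,\epsilon})=x_p/\Q_p(F_{2,\epsilon})$. Applying the quotient rule in $\epsilon$ and evaluating at $\epsilon=0$, where $\Q_p(F_{2,\epsilon})=y_p$, gives $-x_p y_p^{-2}\,\IF(x_0;\Q_p,F_2)$. Rewriting $x_p/y_p$ as $r_p$ produces $-r_p\,\IF(x_0;\Q_p,F_2)/y_p$, the second claimed expression; substituting \eqref{IFQ} would then render both formulas fully explicit if wanted.

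The only genuine subtlety is justifying that the limit defining the PIF may be evaluated by the chain and quotient rules. This is immediate here: $\Q_p$ possesses the influence function in \eqref{IFQ} under the stated positivity and continuity of the densities at the relevant quantiles, so $\epsilon\mapsto\Q_p(F_{j,\epsilon})$ is differentiable at $0$; the outer maps $u\mapsto u/y_p$ and $u\mapsto x_p/u$ are smooth at $u=x_p$ and at $u=y_p>0$ respectively, and composition of differentiable maps gives the result. I expect this differentiability bookkeeping, rather than any computation, to be the main point to state with care.
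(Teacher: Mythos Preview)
Your proposal is correct and follows essentially the same approach as the paper: the paper writes out the first-order expansion $\Q_p(F_\epsilon)=\Q_p(F)+\epsilon\,\IF(x_0;\Q_p,F)+O(\epsilon^2)$ and plugs it into the defining limit for each PIF, which is exactly the chain/quotient rule you invoke, just made explicit. Your added remark on justifying differentiability via positivity of the density at the quantile is a welcome clarification that the paper leaves implicit.
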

The proof of Theorem \ref{th:IFrp} is in Section \ref{app:proof_of_IFrp}.

\subsubsection*{\textit{Partial Influence Functions for the ratio of variances} } \label{sect:PIF_R}

Let the mean and variance of the distribution described by $F$ be $\mu$ and $\sigma^2$.  For $\T$ denoting the functional for the mean estimator, we have $\T(F)=\int xdF=\mu$. Let $\V$ be the functional for the variance estimator where $\V(F)=\int \left[x-T(F)\right]^2 dF=\sigma^2$.  Then $\T(F_\epsilon)=\int xd\left[(1-\epsilon)F+\epsilon \Delta_{x_0}\right]=(1-\epsilon)\mu +\epsilon x_0$ and $\V(F_\epsilon)=\sigma^2+\epsilon(1-\epsilon)\left[(x_0-\mu)^2 - \sigma^2\right].$ Consequently, $\IF(\V,F,x)=(x-\mu)^2-\sigma^2$ is the influence function for the variance estimator. 

Let $\T(F_j)=\mu_j$, $\V(F_j)=\sigma_j^2$ $(j=1,2)$. For $\R$ denoting the functional for the ratio of variances, we have $\R(F_1,F_2)=\V(F_1)/\V(F_2)=\sigma_1^2/\sigma_2^2=\rho$.  Then, for $z_j=(x_0-\mu_j)/\sigma_j$ $(j=1,2)$, the $\PIF$s for $\R$ are
\begin{equation}
\PIF_1(x_0;\R,F_1,F_2) = \rho(z^2_1-1),\;\;\PIF_2(x_0;\R,F_1,F_2) =-\rho(z^2_2-1)\label{IFR}
\end{equation}
As expected, the $\PIF$s are unbounded in $x_0$ indicating that outliers can exert unlimited influence.

\subsubsection*{\textit{Partial Influence Functions for the squared IQR Ratio}} \label{sect:PIF_Rp}

Let $\R_p$ be the functional for the squared ratio of $\IQR$s so that 
$$\R_p(F_1,F_2)=\left[\frac{\Q_{1-p}(F_1) - \Q_{p}(F_1)}{\Q_{1-p}(F_2) - \Q_{p}(F_2)}\right]^2=R_p.$$  

\begin{theorem}\label{th:IFRp}
For $\IF (x_0;\Q_{p},F)$ is defined in \eqref{IFQ}, the partial influence functions of $\R_p$ for contamination introduced to each of $F_1$ and $F_2$ are
\begin{align*}
\PIF_1(x_0;\R_p,F_1,F_2) &=  \frac{2R_p}{x_{1-p} - x_p}\left[\IF (x_0;\Q_{1-p},F_1) - \IF (x_0;\Q_p,F_1)\right],\\
 \PIF_2(x_0;\R_p,F_1,F_2) &= -\frac{2R_p}{y_{1-p} - y_p}\left[\IF (x_0;\Q_{1-p},F_2) - \IF (x_0;Q_p,F_2)\right].
 \end{align*}
\end{theorem}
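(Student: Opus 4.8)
The plan is to treat $\R_p$ as a composition of simpler functionals and differentiate along the contamination path via the chain rule, exactly as one would for the ratio of quantiles in Theorem \ref{th:IFrp}. First I would introduce shorthand for the two interquantile-range functionals, $\iqr_1(F_1) = \Q_{1-p}(F_1) - \Q_p(F_1)$ and $\iqr_2(F_2) = \Q_{1-p}(F_2) - \Q_p(F_2)$, so that $\R_p(F_1,F_2) = [\iqr_1(F_1)/\iqr_2(F_2)]^2$; evaluated at the uncontaminated pair these equal $x_{1-p}-x_p$ and $y_{1-p}-y_p$. I would also write $F_{1,\epsilon}=(1-\epsilon)F_1+\epsilon\Delta_{x_0}$ for the contaminated distribution used in the definition of $\PIF_1$.

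For the first partial influence function I would hold $F_2$ fixed and contaminate only $F_1$, so the denominator $\iqr_2(F_2)$ is constant in $\epsilon$. Differentiating $[\iqr_1(F_{1,\epsilon})/\iqr_2(F_2)]^2$ at $\epsilon=0$ by the chain rule gives $\frac{2\,\iqr_1(F_1)}{\iqr_2(F_2)^2}$ times $\frac{d}{d\epsilon}\iqr_1(F_{1,\epsilon})|_{\epsilon=0}$. The latter is the influence function of a difference of quantile functionals, which by linearity of the influence function equals $\IF(x_0;\Q_{1-p},F_1) - \IF(x_0;\Q_p,F_1)$. Rewriting the leading constant through $R_p = \iqr_1(F_1)^2/\iqr_2(F_2)^2$, so that $2\,\iqr_1(F_1)/\iqr_2(F_2)^2 = 2R_p/(x_{1-p}-x_p)$, yields the stated expression for $\PIF_1$.

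The second partial influence function follows symmetrically, now holding $F_1$ (hence the numerator) fixed and differentiating $\iqr_1(F_1)^2\,\iqr_2(F_{2,\epsilon})^{-2}$ in $\epsilon$. The power rule produces the factor $-2\,\iqr_1(F_1)^2/\iqr_2(F_2)^3$; after identifying $\iqr_1(F_1)^2/\iqr_2(F_2)^2$ with $R_p$ and extracting the remaining $\iqr_2(F_2)=y_{1-p}-y_p$, the coefficient becomes $-2R_p/(y_{1-p}-y_p)$, multiplying the difference of quantile influence functions for $F_2$.

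The only steps requiring care, and the part I would regard as the main obstacle, are the two analytic facts underpinning the chain rule: that $\epsilon \mapsto \R_p(F_{1,\epsilon},F_2)$ is differentiable at $0$ so the composition rule for Gateaux derivatives applies, and that the influence function of the interquantile range is genuinely additive, equalling the difference of the quantile influence functions in \eqref{IFQ}. Both follow from the positivity and continuity of the densities at the relevant quantiles (the same regularity already invoked for Shoemaker's statistic), which guarantee each quantile functional is differentiable along the contamination path; granting these, the remainder is the routine algebraic simplification sketched above.
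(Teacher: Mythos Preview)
Your proposal is correct and essentially matches the paper's argument: both compute the derivative of the squared interquantile-range ratio along the contamination path, using linearity of the influence function of $\iqr_p$ and then simplifying the leading constant via $R_p$. The only presentational difference is that the paper works from explicit first-order expansions $\Q_{1-p}(F_\epsilon)-\Q_p(F_\epsilon)=(x_{1-p}-x_p)+\epsilon[\IF_{1,1-p}-\IF_{1,p}]+O(\epsilon^2)$, squares, and takes the limit in the definition, whereas you package the same computation as an application of the chain rule for Gateaux derivatives; the content is identical.
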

The proof of Theorem \ref{th:IFRp} is in Section \ref{app:proof_of_IFRp}.

\subsubsection{\textit{Asymptotic variances}} \label{sect:ASV}

Recall that $\mu_i$ and $\sigma_i$ denote the mean and standard deviation of $F_i$ $(i=1,2)$ and that $\rho=\sigma_1^2/\sigma_2^2$.  Then from \eqref{asv:RPIF} and \eqref{IFR}, it is straight forward to show that the ASV for the ratio of variances estimator is
\begin{equation}
\ASV(\R;n_1,n_2)=\rho^2 \left\{\frac{1}{w_1}[E_{F_1}(Z_1^4)-1]+\frac{1}{w_2}[E_{F_2}(Z_2^4)-1]\right\}\label{ratio_of_var}
\end{equation}
where $Z_i=(X-\mu_i)/\sigma_i$ so that $E_{F_i}(Z_i^4)$ is the scaled fourth central moment of $F_i$ $(i=1,2)$.  Recall that $g_1(p)=1/f_1(x_p)$ and $g_2(p)=1/f_1(p)$ are the quantile density functions. We now provide the ASV for the  ratio of quantiles.

\begin{theorem}\label{th:ASVrp}
The asymptotic variances of $\sqrt{n_1 + n_2}\mathsf{r}_p(F_{n_1}, F_{n_2})$ and $\sqrt{n_1 + n_2}\R_p(F_{n_1}, F_{n_2})$ are
\begin{align*}
\ASV(\mathsf{r}_p;n_1,n_2)=& p(1-p)r_p^2\Bigg\{\frac{g_1^2(p)}{w_1 x_p^2}
+\frac{g_2^2(p)}{w_2y_p^2} \Bigg\}.
\end{align*}
and
\begin{align*}
\ASV(\R_p;n_1,n_2)=& 4p\rho_p^2\Bigg\{\frac{g_1^2(p) + g_1^2(1-p) - p \left[g_1(p) + g_1(1-p)\right]^2}{w_1(x_{1-p} - x_p)^2}\\
&+\frac{g_2^2(p) + g_2^2(1-p) - p \left[g_2(p) + g_2(1-p)\right]^2}{w_2(y_{1-p} - y_p)^2} \Bigg\}.
\end{align*}
\end{theorem}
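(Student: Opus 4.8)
The plan is to substitute the partial influence functions into the general asymptotic variance formula \eqref{asv:RPIF} and evaluate the resulting expectations using the explicit form of the quantile influence function in \eqref{IFQ}. Both parts follow the same template: each $\PIF$ is a constant multiple of either a single quantile influence function or a difference of two such influence functions, so the required expectations collapse to second moments of the quantile influence function. Throughout I would lean on the quantile variance identity $E_F[\IF^2(X;\Q_q,F)]=q(1-q)g^2(q)$, which is recorded in the excerpt just after \eqref{IFQ}.

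For $\mathsf{r}_p$, I would take the two $\PIF$s from Theorem \ref{th:IFrp}. Since $\PIF_1$ equals $\IF(x_0;\Q_p,F_1)/y_p$ and $\PIF_2$ equals $-r_p\,\IF(x_0;\Q_p,F_2)/y_p$, squaring and taking expectations gives $E_{F_1}[\PIF_1^2]=p(1-p)g_1^2(p)/y_p^2$ and $E_{F_2}[\PIF_2^2]=r_p^2\,p(1-p)g_2^2(p)/y_p^2$. Substituting into \eqref{asv:RPIF} and rewriting $1/y_p^2=r_p^2/x_p^2$ (using $r_p=x_p/y_p$) lets me factor out $p(1-p)r_p^2$ and recover the first claim.

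For $\R_p$, the $\PIF$s from Theorem \ref{th:IFRp} contain the difference $D=\IF(X;\Q_{1-p},F)-\IF(X;\Q_p,F)$, so the work reduces to computing $E_F[D^2]=E_F[\IF^2(X;\Q_{1-p},F)]-2E_F[\IF(X;\Q_{1-p},F)\,\IF(X;\Q_p,F)]+E_F[\IF^2(X;\Q_p,F)]$. The two squared terms are $p(1-p)g^2(1-p)$ and $p(1-p)g^2(p)$ by the variance identity. The cross term is the crux: writing each influence function in indicator form from \eqref{IFQ} and noting that for $p\in(0,1/2)$ we have $x_p\le x_{1-p}$, so the events $\{X\le x_p\}$ and $\{X\le x_{1-p}\}$ are nested and $E_F[I(X\le x_p)I(X\le x_{1-p})]=E_F[I(X\le x_p)]=p$, I would obtain $E_F[\IF(X;\Q_{1-p},F)\,\IF(X;\Q_p,F)]=p^2 g(p)g(1-p)$. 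Combining the three pieces and factoring out a $p$ gives $E_F[D^2]=p\{g^2(p)+g^2(1-p)-p[g(p)+g(1-p)]^2\}$, and inserting this into \eqref{asv:RPIF} with the $2R_p/(x_{1-p}-x_p)$ and $2R_p/(y_{1-p}-y_p)$ prefactors produces the stated second claim (with the leading constant $4pR_p^2$).

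The main obstacle is the cross-covariance term; the rest is bookkeeping. The decisive observation is the nesting of the two indicator events when $p<1/2$, which turns the product of indicators back into a single indicator and collapses the joint moment to $p$. Once that is settled, reorganizing the terms into the compact numerator $g^2(p)+g^2(1-p)-p[g(p)+g(1-p)]^2$ is a routine algebraic step.
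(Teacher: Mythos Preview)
Your proposal is correct and matches the paper's proof in both strategy and detail: substitute the $\PIF$s from Theorems \ref{th:IFrp} and \ref{th:IFRp} into \eqref{asv:RPIF}, use $E_F[\IF^2(X;\Q_q,F)]=q(1-q)g^2(q)$ for the diagonal terms, and handle the cross term via the nesting $I(X\le x_p)I(X\le x_{1-p})=I(X\le x_p)$ when $p<1/2$ to obtain $E_F[\IF(X;\Q_p,F)\IF(X;\Q_{1-p},F)]=p^2g(p)g(1-p)$. The paper writes out the squared and product influence functions explicitly in indicator form before taking expectations, whereas you invoke the variance identity directly for the squares, but this is purely cosmetic.
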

\noindent The proof of Theorem \ref{th:ASVrp} is in Section \ref{app:proof_of_ASVrp}

\begin{corollary}\label{corr:asv}
Suppose that $X$ and $Y$ are both random variables from the same location-scale family such that the density of $X$ may be written $f(x;\mu_1, \sigma_1)$ and the density of $Y$ $f(y;\mu_2, \sigma_2)$ where $\mu_1$, $\mu_2$ and $\sigma_1$, $\sigma_2$ are the respective location and scale parameters.  Let $q_{1-p}$ and $q_p$ denote the $(1-p)$th and $p$th quantiles of the distribution with density $f(\cdot;0,1)$ and $g_0(1-p)=1/f(q_{1-p};0,1)$ and $g_0(p)=1/f(q_{p};0,1)$ the respective quantile densities.  Then $$\ASV(\R_p;n_1,n_2)= 4p\frac{\sigma_1^4}{\sigma_2^4}\Bigg\{\frac{g_0^2(p) + g_0^2(1-p) - p \left[g_0(p) + g_0(1-p)\right]^2}{w_1(1-w_1)(q_{1-p} - q_p)^2}\Bigg\}.$$
\end{corollary}

\begin{proof}
Since $X$ and $Y$ are from the same location-scale family, then $x_{1-p}-x_p=\eta_1 (q_{1-p} - q_p)$, $y_{1-p}-y_p=\eta_2 (q_{1-p} - q_p)$ and $$f(x;\mu_1,\eta_1)=\frac{1}{\sigma_1}f\left(\frac{\sigma_1 x - \mu_1}{\sigma_1};0,1\right), f(y;\mu_2,\sigma_2)=\frac{1}{\sigma_2}f\left(\frac{\sigma_2 y - \mu_2}{\sigma_2};0,1\right).$$  Using these results $g_1(p)=g_0(p)\sigma_1$ and $g_2(p)=g_0(p)\sigma_2$.  The result follows after some simplification and noting that $w_2=1-w_1$.
\end{proof}

\begin{remark}\label{remark}
Since the ASV in Corollary \ref{corr:asv} depends on location and scale only through $\sigma_1^4/\sigma_2^4$ which is a common factor to all terms, then the choice of $p$ that minimizes the ASV is independent of the location and scale parameters.
\end{remark}

 \begin{table}[h]
      \centering
    \caption{Choice of $p$ to minimize ASV for the squared IQR ratio.}\label{table:minp}
    \begin{tabular}{cccccc}
    \hline
    Distribution & $p$ & Distribution & $p$ & Distribution & $p$ \\
    \hline
    Exp$(\lambda)$ & 0.128 &  Beta(0.1,0.1) & 0 & Gamma(1) & 0.128 \\
    Unif$(a,b)$ & 0 & Beta(0.5,0.5) & 0 & Gamma(2) & 0.110 \\
    Log Normal(0,1) & 0.193 & Beta(1,1) & 0 & Gamma(10) & 0.081 \\
    Log Normal(1,1) & 0.193 & Beta(10,10) & 0.055 & PAR(1) & 0.282 \\
    N$(\mu,\sigma^2)$ & 0.069 & Weibull(0.5) & 0.181 & PAR(2) & 0.224 \\
    Chi-Squared(1) & 0.127 & Weibull(1) & 0.128 & PAR(3) & 0.198 \\
    Chi-Squared(2) & 0.128 & Weibull(2) & 0.069 & PAR(5) & 0.173 \\
    Chi-Squared(25) & 0.079 & Weibull(10) & 0.081 & PAR(7) & 0.161\\
    \hline
    \end{tabular}  
  \end{table}
  
We now explore the choices of $p$ that result in the minimum ASV of the squared IQR ratio estimator for several distributions. As shown in Table \ref{table:minp}, the choice of $p$ that minimizes the ASV varies for different distributions.  The choice of $p$ that minimizes the ASV for the exponential, uniform and normal distributions does not depend on the parameters of these distributions (see Remark \ref{remark} which is a consequence of Corollary \ref{corr:asv}).  For distributions considered with the exception of small shape parameter for the Pareto type II (PAR), choosing a $p< 0.25$ gives a smaller ASV than if one were to use the ratio of interquartile ranges.  This agrees with the observations of \cite{shoemaker1999interquartile}.  Our interest is mainly in applications to skewed data and we therefore favor $p=0.2$ which would give good results for log normal and Pareto-type distributions. 
 
\section{Interval Estimators} \label{sect:IntEst}

To estimate the quantiles, we use the default continuous sample quantile from the R \texttt{quantile} function.  To estimate the quantile density, $g_1$ and then similarly for $g_2$, we use the kernel density estimator
$$\widehat{g}_1(p,b)=\sum^n_{i=1}X_{(i)}\left\{k_b\left(p - \frac{i-1}{n_1}\right) - k_b\left(p-\frac{i}{n_1}\right)\right\}$$
with kernel function $k_b$, for which we use the \cite{epan-1969} kernel, and bandwidth $b$.  It has recently been shown that excellent confidence interval coverage for estimators of functions of quantiles can be obtained for sample sizes even as low as 30 \citep[see][]{prendergast2016quantileLorenz, prendergast2017largen, prendergast2017simple}.  These works use the Quantile Optimality Ratio \citep[QOR, ][]{prendergast2016exploiting} to choose the optimal $b$ for estimaing the quantile densities.  We therefore use the QOR in selecting our $b$ although other choices of $b$ are also possible.  

\subsection{\textit{Approximate variances of the estimators}} \label{sect:ApproxVar}

Let $\widehat{\rho}=S_1^2/S_2^2$ be the estimator of $\sigma_1^2/\sigma_2^2$ where $S_i^2 = \V(F_{n_i})$ $(i=1,2)$ are the sample variance estimators.  Let $\{X_i\}^{n_1}_{i=1}$ denote the simple random sample for the first sample with sample mean $\overline{X}=\T(F_{n_1})$ and $\{Y_i\}^{n_2}_{i=1}$ denote the simple random sample for the second with  $\overline{Y}=\T(F_{n_2})$. From \eqref{ratio_of_var},
\begin{align*}
  \Var\left(\widehat{\rho}\right)&\approx\frac{\widehat{\rho}^2}{n_1+n_2} \left[\frac{1}{w_1}\left(\overline{Z_1^4}-1\right)+\frac{1}{w_2}\left(\overline{Z_2^4}-1\right)\right]
\end{align*}
where
$$w_i=\frac{n_i}{n_1+n_2},\ \overline{Z_1^4}= \frac{1}{n_1}\sum^{n_1}_{i=1}\left(\frac{X_i-\bar{X}}{S_1}\right)^4, \ \overline{Z_2^4}= \frac{1}{n_1}\sum^{n_1}_{i=1}\left(\frac{Y_i-\bar{Y}}{S_2}\right)^4,\ \widehat{\rho} = \frac{S_1^2}{S_2^2}.$$

Let $\Var(\widehat{r}_p)$ denote the variance of the ratio of quantiles estimator. Then, from Theorem \ref{th:ASVrp}
\begin{align*}
\Var(\widehat{r}_p)=& \frac{p(1-p)\widehat{r}_p^2}{n_1+n_2}\Bigg\{\frac{\widehat{g}_1^2(p)}{w_1 \widehat{x}_p^2}
+\frac{\widehat{g}_2^2(p)}{w_2\widehat{y}_p^2} \Bigg\},
\end{align*}
where $\widehat{g}_i(p)$ $(i=1,2)$ is the estimated quantile density using the QOR method. Similarly, let $\Var(R_p)$ denote the variance of the squared ratio of IQRs estimator. Then, from Theorem \ref{th:ASVrp}
\begin{align*}
\Var(\widehat{\rho}_p)\approx &\ \frac{4\ p\widehat{R}_p^2}{n_1+n_2}\Bigg\{\frac{\widehat{g}_1^2(p) + \widehat{g}_1^2(1-p) - p \left[\widehat{g}_1(p) + \widehat{g}_1(1-p)\right]^2}{w_1(\widehat{x}_{1-p} - \widehat{x}_p)^2}\nonumber\\
&+\frac{\widehat{g}_2^2(p) + \widehat{g}_2^2(1-p) - p \left[\widehat{g}_2(p) + \widehat{g}_2(1-p)\right]^2}{w_2(\widehat{y}_{1-p} - \widehat{y}_p)^2} \Bigg\}.
 \end{align*}

\subsection{\textit{Asymptotic confidence intervals}} \label{sect:CIs}

In constructing our interval estimators for the ratios we use the log transformation and exponentiate to return to the ratio scale.  For a random variable $W>0$, using the Delta method it follows that $\Var[\ln(W)]\approx \Var(W)/W^2$.  Hence, approximate $(1-\alpha) 100$\% confidence intervals for the ratio of quantiles, ratio of variances and squared ratio of IQRs are
\begin{equation}
\exp\Bigg[\ln(\widehat{r}_p)\pm  z_{1-\alpha/2}\frac{1}{\widehat{r}_p}\sqrt{\Var\left(\widehat{r}_p\right)}\Bigg]\label{CIs1} 
\end{equation}
\begin{equation}
\exp\Bigg[\ln(\widehat{\rho})\pm  z_{1-\alpha/2}\frac{1}{\widehat{\rho}}\sqrt{\Var\left(\widehat{\rho}\right)}\Bigg] \ and \
\exp\Bigg[\ln(\widehat{\rho}_p)\pm  z_{1-\alpha/2}\frac{1}{\widehat{\rho}_p}\sqrt{\Var\left(\widehat{\rho}_p\right)}\Bigg]\label{CIs2}
\end{equation}
where $z_{1-\alpha/2}$ is the $(1-\alpha/2)\times $100 percentile of the standard normal distribution and where for the variances we use the approximations from Section \ref{sect:ApproxVar}.

\section{Simulations and Examples} \label{sect:SimEx} 
\subsection{\textit{Simulations}} \label{sect:Sim} 
A simulation study was conducted to compare the performance among estimators  by considering coverage probability (cp) and the average confidence interval width (w)  as the performance measures. We have selected the lognormal, exponential, chi-square and Pareto distributions with different parameter choices with the sample sizes $n=\{50, 100, 200, 500, 1000\}$  and 10,000 simulation trials to our simulation study.

\renewcommand{\arraystretch}{0.95}
\begin{table}[htbp]
  \centering
  \caption{Simulated coverage probabilities (and widths) for the 95\% confidence interval estimators for the interval based on the Price and Bonnet (rows labeled PB) method and the interval in \eqref{CIs1} ($r_p$) with $p=0.25, 0.5, 0.75$.} 
    \begin{tabular}{crcccc}
    \toprule
    \multicolumn{1}{c}{Sample size} & \multicolumn{1}{l}{$p$} & \multicolumn{1}{l}{$X\sim$ LN(0,1)} & $X\sim $ EXP(1)  & $X\sim \chi^2_5$ & $X\sim$ PAR(1,7)  \\
    \multicolumn{1}{c}{($n_1$,$n_2$)} &       & $Y\sim $ LN(0,1) & $Y\sim$ EXP(1) & $Y\sim \chi^2_2$    & $Y\sim$ PAR(1,3)  \\
    \midrule
    50,50 & PB    & 0.961(1.12) & 0.965(1.31) & 0.964(3.29) & 0.963(0.57) \\
          & $r_{0.25}$ & 0.963(1.18) & 0.959(1.71) & 0.957(6.05) & 0.962(0.75) \\
          & $r_{0.5}$  & 0.972(1.17) & 0.967(1.30) & 0.966(3.20) & 0.973(0.60) \\
          & $r_{0.75}$ & 0.977(1.38) & 0.970(1.16) & 0.965(2.24) & 0.971(0.55) \\
    \midrule
    100,100 & PB     & 0.960(0.75) & 0.962(0.88) & 0.962(2.21) & 0.958(0.38) \\
          & $r_{0.25}$ & 0.965(0.82) & 0.961(1.19) & 0.958(4.31) & 0.959(0.51) \\
          & $r_{0.5}$  & 0.970(0.77) & 0.960(0.87) & 0.958(2.19) & 0.966(0.39) \\
          & $r_{0.75}$ & 0.972(0.89) & 0.962(0.77) & 0.965(1.52) & 0.969(0.35) \\
    \midrule
    200,200 & PB     & 0.952(0.51) & 0.953(0.59) & 0.950(1.50) & 0.954(0.26) \\
          & $r_{0.25}$ & 0.967(0.58) & 0.957(0.82) & 0.957(3.04) & 0.962(0.36) \\
          & $r_{0.5}$  & 0.962(0.53) & 0.961(0.60) & 0.957(1.52) & 0.960(0.26) \\
          & $r_{0.75}$ & 0.967(0.59) & 0.960(0.53) & 0.959(1.04) & 0.966(0.24) \\
    \midrule
    200,500 & PB     & 0.948(0.42) & 0.951(0.49) & 0.953(1.11) & 0.947(0.21) \\
          & $r_{0.25}$ & 0.965(0.48) & 0.953(0.68) & 0.959(2.24) & 0.958(0.30) \\
          & $r_{0.5}$  & 0.961(0.44) & 0.958(0.50) & 0.960(1.12) & 0.961(0.22) \\
          & $r_{0.75}$ & 0.965(0.49) & 0.958(0.43) & 0.961(0.78) & 0.961(0.19) \\
    \midrule
    500,500 & PB     & 0.952(0.32) & 0.947(0.36) & 0.950(0.93) & 0.949(0.16) \\
          & $r_{0.25}$ & 0.962(0.35) & 0.957(0.51) & 0.957(1.90) & 0.955(0.22) \\
          & $r_{0.5}$  & 0.958(0.33) & 0.959(0.37) & 0.957(0.95) & 0.957(0.16) \\
          & $r_{0.75}$ & 0.963(0.36) & 0.954(0.32) & 0.955(0.64) & 0.960(0.14) \\
    \midrule
    500,1000 & PB     & 0.946(0.27) & 0.947(0.31) & 0.946(0.73) & 0.948(0.13) \\
          & $r_{0.25}$ & 0.960(0.31) & 0.953(0.44) & 0.957(1.49) & 0.953(0.19) \\
          & $r_{0.5}$  & 0.960(0.28) & 0.953(0.32) & 0.955(0.74) & 0.959(0.14) \\
          & $r_{0.75}$ & 0.961(0.31) & 0.959(0.28) & 0.954(0.51) & 0.960(0.12) \\
    \midrule
    1000,1000 & PB     & 0.945(0.22) & 0.948(0.25) & 0.945(0.65) & 0.946(0.11) \\
          & $r_{0.25}$ & 0.959(0.25) & 0.958(0.36) & 0.952(1.33) & 0.954(0.15) \\
          & $r_{0.5}$  & 0.955(0.23) & 0.952(0.26) & 0.957(0.66) & 0.954(0.11) \\
          & $r_{0.75}$ & 0.958(0.25) & 0.957(0.23) & 0.954(0.45) & 0.957(0.10) \\
    \bottomrule
    \end{tabular}%
  \label{tab:CP_rp}%
\end{table}%
\renewcommand{\arraystretch}{1}

Simulated coverages based on 10,000 trials for the Price and Bonett method and  interval estimator in \eqref{CIs1} are provided in Table \ref{tab:CP_rp} for several distributions. The Price and Bonett (PB) method for the ratio medians provides very good coverage compared to the nominal 0.95 and the interval width decreases with increasing sample sizes. Similar results can be seen for the ratio of quantiles interval estimator when we choose $p=0.5$ for the ratio of medians.  Coverages suggest that the use of $r_{0.5}$ provides slightly more conservative coverage but with similar interval width.  For ratios of the quartiles ($p=0.25$ and $p=0.75$) coverages are again very good with none reported below the nominal 0.95 and most less than 0.97.  The highest coverages were reported for the smaller sample size setting where $n_1=n_2=50$.

\renewcommand{\arraystretch}{0.95}
\begin{table}[p]
  \centering
  \small
  \caption{Simulated coverage probabilities (and widths) for the 95\% confidence interval estimators for the interval based on the $F$-test (rows labeled $F$) and the intervals in \eqref{CIs2} for the ratio of variances ($R$) and squared ratio of IQRs ($R_p$) with several choices of $p$.
  (*median widths reported due to excessively large average widths after back exponentiation)}
    \begin{tabular}{rrcccc}
    \hline
     \multicolumn{1}{c}{Sample size} & \multicolumn{1}{l}{} & \multicolumn{1}{l}{$X\sim$ LN(0,1)} & $X\sim $ EXP(1)  & $X\sim \chi^2_5$ & $X\sim$ PAR(1,7)  \\
    \multicolumn{1}{c}{($n_1$,$n_2$)} &       & $Y\sim $ LN(0,1) & $Y\sim$ EXP(1) & $Y\sim \chi^2_2$    & $Y\sim$ PAR(1,3)  \\
    \hline
    \multicolumn{1}{c}{50,50} & $F$     & 0.445(2.01) & 0.705(1.39) & 0.756(3.47) & 0.405(0.14) \\
          & $R$      & 0.778(6.11) & 0.867(2.27) & 0.869(4.91) & 0.714(0.35) \\
          & $R_{0.05}$ & 0.975(15.25*) & 0.971(7.67) & 0.969(18.14) & 0.977(0.74*) \\
          & $R_{0.1}$  & 0.978(20.64) & 0.967(4.60) & 0.968(11.44) & 0.977(2.75) \\
          & $R_{0.2}$  & 0.978(7.97) & 0.971(4.24) & 0.971(11.91) & 0.974(0.93) \\
    \hline
    \multicolumn{1}{c}{100,100} & $F$     & 0.389(1.15) & 0.689(0.88) & 0.741(2.19) & 0.341(0.08) \\
          & $R$     & 0.829(4.14) & 0.896(1.59) & 0.903(3.38) & 0.740(0.23) \\
          & $R_{0.05}$ & 0.977(9.57) & 0.970(2.79) & 0.970(6.75) & 0.976(1.03) \\
          & $R_{0.1}$  & 0.975(4.14) & 0.970(2.18) & 0.965(5.81) & 0.975(0.42) \\
          & $R_{0.2}$  & 0.975(3.14) & 0.962(2.17) & 0.967(6.21) & 0.970(0.38) \\
   \hline
    \multicolumn{1}{c}{200,200} & $F$     & 0.348(0.70) & 0.686(0.58) & 0.746(1.47) & 0.296(0.04) \\
          & $R$      & 0.861(2.96) & 0.915(1.10) & 0.926(2.40) & 0.766(0.15) \\
          & $R_{0.05}$ & 0.978(2.99) & 0.968(1.54) & 0.965(3.83) & 0.975(0.27) \\
          & $R_{0.1}$  & 0.973(2.07) & 0.966(1.30) & 0.965(3.50) & 0.968(0.22) \\
          & $R_{0.2}$  & 0.973(1.74) & 0.961(1.31) & 0.963(3.78) & 0.965(0.21) \\
   \hline
    \multicolumn{1}{c}{200,500} & $F$     & 0.340(0.54) & 0.678(0.48) & 0.770(1.20) & 0.314(0.03) \\
          & $R$      & 0.872(2.44) & 0.930(0.91) & 0.937(1.89) & 0.831(0.12) \\
          & $R_{0.05}$ & 0.971(2.12) & 0.965(1.18) & 0.962(2.74) & 0.972(0.17) \\
          & $R_{0.1}$  & 0.969(1.56) & 0.963(1.02) & 0.962(2.60) & 0.968(0.15) \\
          & $R_{0.2}$  & 0.963(1.35) & 0.96(1.03) & 0.961(2.88) & 0.963(0.16) \\
    \hline
    \multicolumn{1}{c}{500,500} & $F$     & 0.324(0.39) & 0.679(0.36) & 0.741(0.90) & 0.257(0.02) \\
          & $R$      & 0.897(1.87) & 0.935(0.70) & 0.939(1.53) & 0.775(0.10) \\
          & $R_{0.05}$ & 0.972(1.40) & 0.963(0.84) & 0.962(2.13) & 0.967(0.13) \\
          & $R_{0.1}$  & 0.965(1.08) & 0.962(0.74) & 0.960(2.01) & 0.964(0.11) \\
          & $R_{0.2}$ & 0.964(0.96) & 0.959(0.76) & 0.960(2.20) & 0.960(0.12) \\
    \hline
    \multicolumn{1}{c}{500,1000} & $F$     & 0.307(0.33) & 0.681(0.31) & 0.765(0.77) & 0.272(0.02) \\
          & $R$      & 0.906(1.63) & 0.940(0.60) & 0.947(1.26) & 0.830(0.08) \\
          & $R_{0.05}$ & 0.966(1.14) & 0.962(0.71) & 0.962(1.70) & 0.970(0.10) \\
          & $R_{0.1}$  & 0.963(0.90) & 0.956(0.63) & 0.962(1.64) & 0.964(0.09) \\
          & $R_{0.2}$  & 0.962(0.81) & 0.961(0.64) & 0.954(1.81) & 0.958(0.10) \\
   \hline
    \multicolumn{1}{c}{1000,1000} & $F$     & 0.302(0.26) & 0.675(0.25) & 0.738(0.63) & 0.215(0.02) \\
          & $R$      & 0.918(1.34) & 0.940(0.49) & 0.941(1.08) & 0.793(0.07) \\
          & $R_{0.05}$ & 0.967(0.89) & 0.963(0.57) & 0.961(1.43) & 0.968(0.08) \\
          & $R_{0.1}$  & 0.963(0.71) & 0.959(0.51) & 0.958(1.38) & 0.960(0.08) \\
          & $R_{0.2}$  & 0.959(0.65) & 0.955(0.52) & 0.956(1.51) & 0.961(0.08) \\
    \hline
    \end{tabular}%
  \label{tab:CP_IQR}%
\end{table}%
\setlength{\parskip}{1.5em}
\renewcommand{\arraystretch}{1}
\parskip 0pt  
Simulated coverages based on 10,000 trials for the F-test and the interval estimators in \eqref{CIs2} are provided in Table \ref{tab:CP_IQR} for several distributions.  The $F$-test approach refers to the standard method for getting an interval for the ratio of variances under the assumption that the data has been sampled from normal distributions.  Consequently, the coverages for the intervals based on the $F$-test (rows labeled $F$) are poor due to the violation of underlying normality.  The interval for ratio of variances using the asymptotic interval provides reasonable coverage for some of the distributions but not when the sample sizes are small to moderate where the intervals appear to be too narrow.  On the other hand, the coverages for the squared IQR ratio interval is very good for all distributions, including for the smaller sample sizes.  For the distributions we have considered here, the squared ratio of IQRs is preferred due to superior coverage.  We have seen this across a broad range of distributions and this can be verified by the reader by using our web application detailed next.  

\subsubsection{\textit{A Shiny web application for the performance comparisons of the intervals}} \label{sect:Shiny}

For further comparisons, we have developed a Shiny \citep{shiny} web application that readers can use to run the simulations with different parameter choices.  This can be found at \url{https://lukeprendergast.shinyapps.io/IQR_ratio/}.

The user can change the distribution, parameters, sample size, probability and the number of trials according to their choices.  Once the desired options are selected the `Run Simulation' button can be pressed and the relevant estimates, coverage probability(cp) and the average width of the confidence interval(w)  will be calculated according to their input choices. 

\subsection{\textit{Examples}} \label{sect:Ex} 
As examples, we have selected three different data sets in different contexts.

\subsubsection{Prostate data} \label{sect:Prost} 

The prostate data set, which we obtained from the \texttt{depthTools} package \citep{dt} in R, is a normalized subset of gene expression data of the 100 most variable genes for 25 randomly selected tumor and 25 randomly selected normal prostate samples from \cite{singh2002gene}. Since the data is not strictly positive meaning ratios of location are not suitable, we restrict our attention to looking for differences in spread between the tumor and normal samples.  Since the sample sizes are comparatively small, we have chosen $p={0.1, 0.2 , 0.25}$ to construct the confidence interval for the ratio of IQRs. We found that there are 6 genes which lead to very different conclusions depending on whether the ratio of variances or ratio of IQRs is used.  These genes, including their abbreviations where applicable, are Carboxylesterase 1 (C1), Glucose-6-phosphate dehydrogenase (G6pd), HDKFZp564A072, S100 calcium-binding protein A4 (S100cbpA4), Selenium binding protein 1 (Sbp1) and Thymosin beta, identified in neuroblastoma cells (Tbiinc).

\begin{figure}[!htb]
 \centering
  \includegraphics[scale = 0.6]{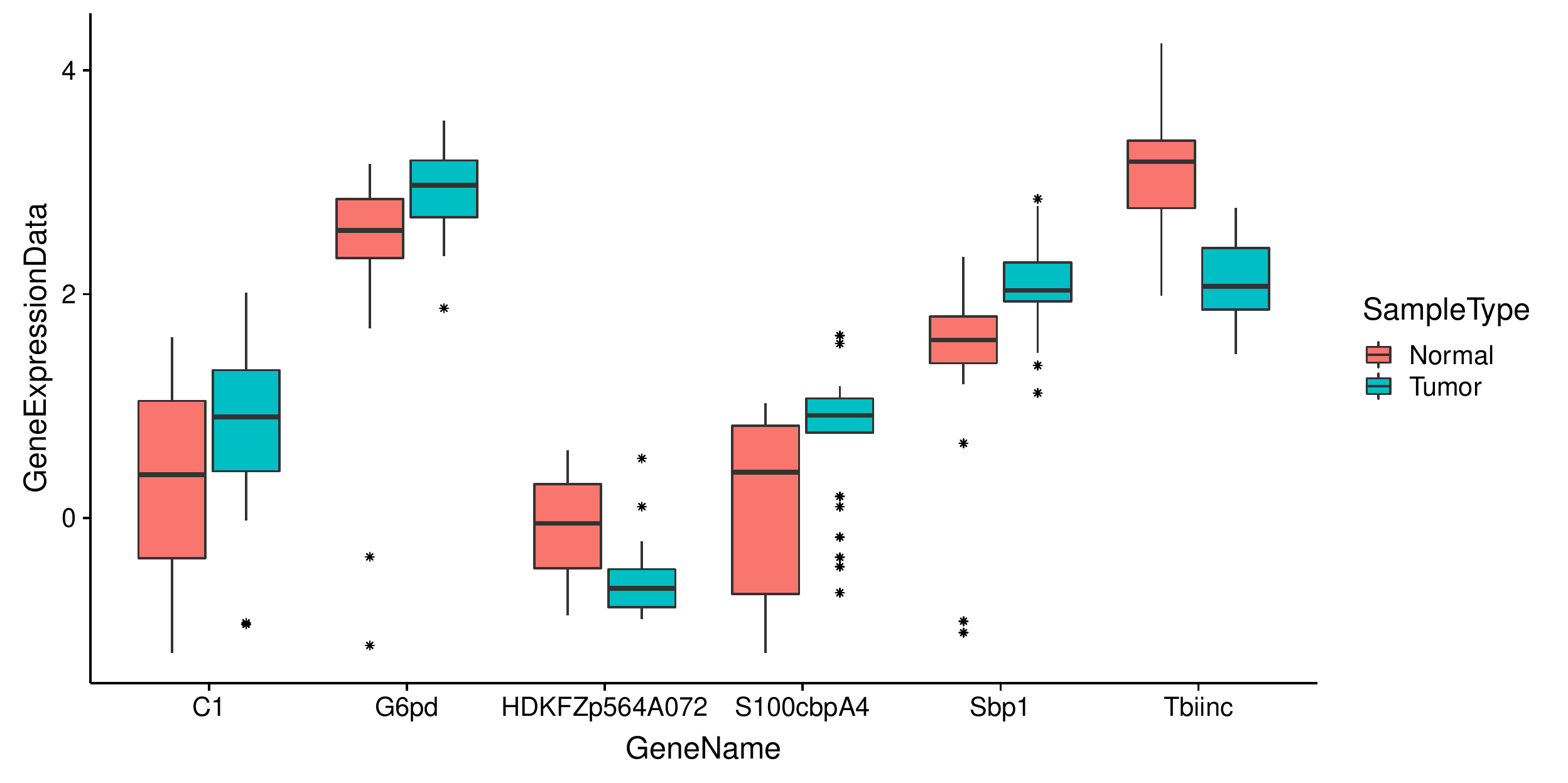} 
  \caption{Box plots of the gene expression data for tumoral and normal samples of selected six genes.}\label{fig:Boxplot_prost}
\end{figure}

Box plots of the genes separated according to groups are shown in Figure \ref{fig:Boxplot_prost}. There is at least one outlier or extreme value in at least one of the two groups in all genes except for Tbiinc.  Ignoring outliers, the boxplots suggest differences in spread for C1, HDKFZp564A072 and S100cbpA4.

\renewcommand{\arraystretch}{0.95}
\begin{table}[htbp]
  \centering
  \caption{95\% asymptotic confidence intervals for the ratio of variances (column labeled $R$) and squared ratios of IQRs ($R_p$) with $p=0.1,0.2,0.25$ for the selected six genes.} 
  \vspace{0.15cm}
    \begin{tabular}{lcccc}
    \toprule
    \multicolumn{1}{l}{Gene} & \multicolumn{1}{c}{ $R$} & \multicolumn{1}{c}{$R_{0.1}$} & \multicolumn{1}{c}{$R_{0.2}$} & \multicolumn{1}{c}{$R_{0.25}$} \\
    \midrule
    C1    & (0.777, 3.016) & (1.206, 5.384) & (2.124, 9.618) & (1.080, 5.426) \\
    G6pd  & (2.085, 20.244) & (0.363, 14.163) & (0.295, 9.564) & (0.140, 8.179) \\
    HDKFZp564A072 & (0.847, 4.397) & (1.533, 8.997) & (1.506, 5.448) & (2.357, 10.065) \\
    S100cbpA4 & (0.950, 3.217) & (1.260, 3.136) & (1.431, 7.786) & (3.211, 183.277) \\
    Sbp1  & (1.193, 10.029) & (0.190, 5.728) & (0.084, 5.387) & (0.172, 11.929) \\
    Tbiinc & (1.163, 4.265) & (0.868, 3.678) & (0.531, 2.611) & (0.533, 2.634) \\
    \bottomrule
    \end{tabular}%
  \label{tab:prost_CI}%
\end{table}%
\renewcommand{\arraystretch}{1.0}

In Table \ref{tab:prost_CI} we provide the estimated asymptotic 95\% confidence intervals for the ratio of variances and squared ratios of IQRs from \eqref{CIs1} and \eqref{CIs2} for the six genes. While the boxplots indicate a difference in spread for  C1, HDKFZp564A072 and S100cbpA4, this is not reflected in the ratio of variances intervals most likely due to the presence of outliers.  However, the differences are captured by the interval estimators for the squared ratio of IQRs.  For the G6pd, Sbpl and Tibiinc genes, the conclusions are reversed where the ratio of variances suggest significant differences in spread while this is not the case for those based on the IQRs.

\subsubsection{Melbourne house price data} \label{sect:HousePrice} 

Since house prices are usually highly skewed, the sample mean is often not indicative of a typical house price.  Therefore, the median is the most popular measure used understanding house price markets. Similarly, the standard deviation is difficult to interpret for skewed data, and IQRs can be more informative when seeking to understand house price spread.  Motivated by this, we now apply our intervals for Melbourne house clearance data from January 2016 obtained from the website \url{https://www.kaggle.com/anthonypino/melbourne-housing-market}.  When describing house prices, it is also common to focus on quartiles \citep[see, e.g.,][]{taylor2011long} so in what follows we choose $p=0.25$ and $p=0.75$.

This data set contains prices of three types of houses (house, unit, townhouse) within different suburbs in Melbourne, Australia. Restricting to suburbs with more than 10 houses sold left data for a total of 301 suburbs.  Our focus will be on comparing house prices between suburbs.  To get an understanding of how often different findings could result depending on whether variances or IQRs were used, we obtained the intervals for every pairing of suburbs result in 45,150 confidence intervals for each ratio.

\begin{table}[htbp]
  \centering
  \caption{Proportion of comparisons giving different conclusions based on ratio of variances/ratio of IQRs ($p=0.25$).  Here we count the number of times that the intervals differ in terms of whether they include one.}
    \begin{tabular}{lll}
    \toprule
    \multicolumn{1}{l}{Type of Conclusion} & \multicolumn{1}{l}{Count} & \multicolumn{1}{l}{Percentage(\%)} \\
    \midrule
    $R$ does not include one and $R_{0.25}$ includes one & 7914  & 17.53 \\
    $R$ includes one and $R_{0.25}$ does not include one  & 5708  & 12.64 \\
    Both intervals include one or do not include one & 31528 & 69.83 \\
    \midrule
    Total & 45150 & 100.00 \\
    \bottomrule
    \end{tabular}%
  \label{tab:HP_prop}%
\end{table}%

The above Table \ref{tab:HP_prop} represents the proportions of times that a different conclusion is reached (assuming conclusions are reached based on whether the ratio intervals include one or not).  We were surprised that just over 30\% of the time there was a difference depending on whether the variances of ratios were used.  This helps to highlight why it one should choose which ratio is best suited their purpose carefully.

\begin{figure}[!htb]
 \centering
 \includegraphics[scale = 0.60]{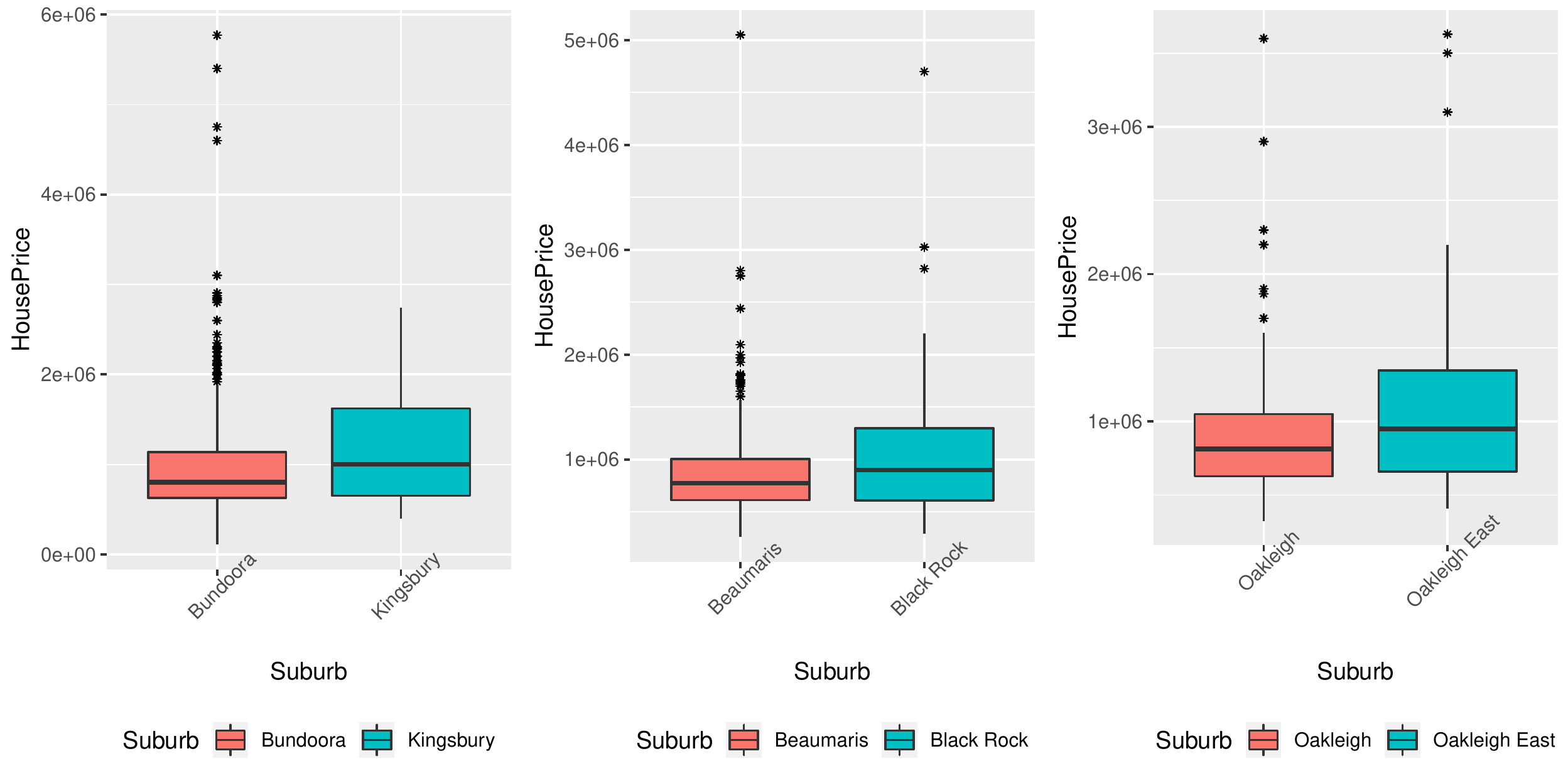}
  \caption{House price comparisons of selected three pair of neighboring suburbs.}\label{fig:HP_neigboxplots}
\end{figure}

To illustrate, Figure \ref{fig:HP_neigboxplots} depicts the house price distribution of selected three pairs of neighboring suburbs.  As expected, the house price distributions are positively skewed and it can be seen there are at least a few outliers in all suburbs except Kingsbury. When considering the middle 50\% of house prices, there are noticeable differences in variation between each of the neighboring suburbs in all three pairings.

\begin{table}[htbp]
  \centering
  \caption{95\% confidence intervals (CI) for the Price and Bonnet method (row labeled PB), asymptotic interval for the ratio of medians ($r$), ratio of variances $R$ and squared ratios of IQRs $R_{0.25}$ for selected three pairs of neighboring suburbs.}  
    \begin{tabular}{lcccccc}
    \toprule
    \multicolumn{1}{c}{Type  } & \multicolumn{2}{c}{Bundoora/Kingsbury} & \multicolumn{2}{c}{Beaumaris/Black Rock} & \multicolumn{2}{c}{Oakleigh/Oakleigh East} \\
    \multicolumn{1}{c}{of} & Estimate & \multicolumn{1}{c}{CI} & Estimate & \multicolumn{1}{c}{CI} & Estimate & \multicolumn{1}{c}{CI} \\
    \midrule
    PB     & 0.801 & (0.628, 1.020) & 0.858 & (0.741, 0.994) & 0.855 & (0.707, 1.035) \\
    $r_{0.25}$ & 0.965 & (0.797, 1.168) & 1.003 & (0.858, 1.172) & 0.952 & (0.809, 1.119) \\
    $r_{0.5}$ & 0.801 & (0.597, 1.074) & 0.858 & (0.745, 0.989) & 0.855 & (0.713, 1.026) \\
     $r_{0.75}$ & 0.670 & (0.537, 0.911) & 0.773 & (0.655, 0.913) & 0.779 & (0.625, 0.971) \\
   $R$     & 1.194 & (0.555, 1.803) & 0.650 & (0.424, 2.361) & 0.698 & (0.436, 2.295) \\
    $R_{0.25}$ & 0.272 & (0.115, 0.643) & 0.325 & (0.166, 0.640) & 0.377 & (0.147, 0.967) \\
    \bottomrule
    \end{tabular}%
  \label{tab:HP_CI}%
\end{table}%

From Table \ref{tab:HP_CI}, the Price and Bonnet and asymptotic methods for the ratio of medians are consistent in their findings.  A significant difference in median house price is detected between Beaumaris and Black Rock, and the intervals for the other pairings, while including the ratio one, also suggest that there may be differences.  The ratio of variance intervals are very wide making it difficult to determine whether there are real differences in spread, despite two of the estimated ratios being substantially less than one.  However, the interval estimators for the squared ratios of interquartle ranges to detect differences in spread and this agrees with other premise that there were notable differences in the spread for the middle 50\% of house prices.  Putting this together and thinking about what it means for a potential home buyer, as an example we consider the Beaumaris and Black Rock neighboring suburbs.  
A typical (median) house in Beaumaris was significantly cheaper (ratio of medians = $0.858$, 95\% CI [0.745, 0.989]) and the spread of prices notably smaller for the middle 50\% of houses.  This reduced spread is also reflected in the approximate equivalent price at the 25th percentile ($\widehat{r}_{0.25}=1.003$, 95\% CI [0.86, 1.17]).

\section{Summary and Discussion} \label{sect:Summary}
We have introduced interval estimators for ratios of quantiles and interquantile ranges.  The intervals have very good coverage, even for samples as small as 50 for a wide range of distributions. Our examples highlight that very different conclusions can be arrived at when using ratios of interquantile ranges instead of ratios of variances.   Future work will consider how to best choose $p$ or the creation of a combined interval that does not require $p$ to be chosen as was done recently by \cite{marozzi2012combined} for hypothesis tests of variation using IQRs.

\appendix

\section{Appendix} \label{sect:Appendix}

\subsection{Proof of Theorem \ref{th:IFrp}}\label{app:proof_of_IFrp}
A power series expansion for $\Q_p(F_\epsilon)$ can be written as $\Q_p(F) + \epsilon \IF(x_0;\Q_{p},F)+O(\epsilon^2)$.  Setting $F_\epsilon=(1-\epsilon)F_1+\epsilon\Delta_{x_0}$ where $\Q_p(F_1)=x_p$ and  for simplicity, write $\IF_{1,p}=\IF(x_0;\Q_{p},F_1)$ and recall $\Q_p(F_2)=y_p$. Then the first PIF is
\begin{small}
\begin{align*}
\PIF_1(x_0;\mathsf{r}_p,F_1,F_2)=&\lim_{\epsilon \downarrow 0}\Bigg\{\frac{x_p + \epsilon\IF_{1,p} + O(\epsilon^2) -x_p}{\epsilon y_p}\Bigg\}=\frac{\IF_{1,p}}{y_p}.
\end{align*}
\end{small}
Setting $F_\epsilon=(1-\epsilon)F_2+\epsilon\Delta_{x_0}$ and letting $\IF_{2,p}=\IF(x_0;\Q_{p},F_2)$ the second PIF is 
\begin{align*}
 \PIF_2(x_0;\mathsf{r}_p,F_1,F_2)=&\lim_{\epsilon \downarrow 0}\left[\frac{x_p/(y_p + \epsilon\IF_{2,p} + O(\epsilon^2))- x_p/y_p}{\epsilon}\right]\\
=&\lim_{\epsilon \downarrow 0}\Bigg\{\frac{x_p[ y_p - (y_p + \epsilon\IF_{2,p} + O(\epsilon^2))]}{\epsilon y_p(y_p + \epsilon\IF_{2,p} + O(\epsilon^2))}\Bigg\}\\
=&\lim_{\epsilon \downarrow 0}\Bigg\{\frac{-x_p[\epsilon\IF_{2,p} + O(\epsilon^2))]}{\epsilon y_p\left[y_p + \epsilon\IF_{2,p} + O(\epsilon^2)\right]}\Bigg\}.
\end{align*}
The proof concludes after canceling the $\epsilon$ terms and taking the limit.

\subsection{Proof of Theorem \ref{th:IFRp}}\label{app:proof_of_IFRp}
A power series expansion for $\Q_{1-p}(F_\epsilon)-\Q_p(F_\epsilon)$ can be written as $$\Q_{1-p}(F)-\Q_p(F) + \epsilon\left[\IF(x_0;\Q_{1-p},F) - \IF(x_0;\Q_{p},F)\right]+O(\epsilon^2).$$  Setting $F_\epsilon=(1-\epsilon)F_1+\epsilon\Delta_{x_0}$ where $\Q_p(F_1)=x_p$, we have $\left[\Q_{1-p}(F_\epsilon)-\Q_p(F_\epsilon)\right]^2$ can be written
\begin{equation}(x_{1-p}-x_p)^2 + 2\epsilon(x_{1-p}-x_p)\left[\IF(x_0;\Q_{1-p},F_1) - \IF(x_0;\Q_{p},F_1)\right] + O(\epsilon^2).\label{IQR2}\end{equation}  For simplicity, write $\PIF_1=\PIF_1(x_0; \R_p,F_1,F_2)$ and $\IF_{1,p}=\IF(x_0;\Q_{p},F_1)$ and recall $\Q_p(F_2)=y_p$. Since $\rho_p=(x_{1-p} - x_p)^2/(y_{1-p} - y_p)^2$, the first PIF is
\begin{small}
\begin{align*}
\PIF_1=&\lim_{\epsilon \downarrow 0}\Bigg\{\frac{(x_{1-p}-x_p)^2 + 2\epsilon(x_{1-p}-x_p)\left[\IF_{1,1-p} - \IF_{1,p}\right] + O(\epsilon^2) -(x_{1-p}-x_p)^2}{\epsilon (y_p-y_{1-p})^2}\Bigg\}\\
=&\frac{2\rho_p}{(x_{1-p} - x_p)}\left[\IF_{1,1-p} - \IF_{1,p}\right].
\end{align*}
\end{small}
Let $\iqr_p(F)=\mathcal{Q}_{1-p}(F)-\mathcal{Q}_p(F)$ be the functional for the IQR at $p$ and for the second PIF set $F_\epsilon=(1-\epsilon)F_2+\epsilon\Delta_{x_0}$.  Then 
\begin{align*}
\PIF_2=&\lim_{\epsilon \downarrow 0}\Bigg\{\frac{(x_{1-p}-x_p)^2\left[\iqr^2(F_\epsilon)\right]^{-1} -(x_{1-p}-x_p)^2/(y_{1-p} - y_p)^2}{\epsilon}\Bigg\}\\
=&\lim_{\epsilon \downarrow 0}\Bigg\{\frac{(x_{1-p}-x_p)^2(y_{1-p} - y_p)^2 -(x_{1-p}-x_p)^2\iqr^2(F_\epsilon)}{\epsilon(y_{1-p} - y_p)^2\iqr^2(F_\epsilon)}\Bigg\}\\
=&\lim_{\epsilon \downarrow 0}\Bigg\{\frac{-2\epsilon(x_{1-p}-x_p)^2(y_{1-p} - y_p)\left[\IF_{2,1-p} - \IF_{2,p}\right] + O(\epsilon^2)}{\epsilon(y_{1-p} - y_p)^2\iqr^2(F_\epsilon)}\Bigg\}
\end{align*}
when using \eqref{IQR2} but evaluated at $F_2$ and letting $\IF_{2,p}=\IF(x_0;\Q_{p},F_2)$. The proof concludes after canceling the $\epsilon$ terms and taking the limit.

\subsection{Proof of Theorem \ref{th:ASVrp}}\label{app:proof_of_ASVrp}

Note $\IF(x_0;\Q_p, F_1)^2=\left[p^2+(1-2p)I(x_p\geq x_0)\right]g_1^2(p)$.  Then, from above and Theorem \ref{th:IFrp} and noting that, for example, $E_{F_1}[I(x_p\geq X)]=p$, for $X\sim F_1$,
\begin{align*}
E_{F_1}\left[\PIF_1(x_0;\mathsf{r}_p,F_1,F_2)^2\right] = & \frac{\mathsf{r}_p^2}{x_p^2}E\left[\IF(X;\Q_p,F_1)^2\right]=\frac{p(1-p)r_p^2}{x_p^2}\left\{g_1^2(p)\right\}.
\end{align*}
$E_{F_2}\left[\PIF_2(x_0;\mathsf{r}_p,F_1,F_2)^2\right]$ is derived similarly and the asymptotic variance follows by applying \eqref{asv:RPIF}.

For the ratio of IQRs, first note $\IF(x_0;\Q_{1-p}, F_1)^2=\left[(1-p)^2-(1-2p)I(x_{1-p}\geq x_0)\right]g_1^2(1-p)$. $\IF(x_0;\Q_p, F_1)\IF(x_0;\Q_{1-p}, F_1)=p\left[(1-p)-I(x_{1-p}\geq x_0)+I(x_p\geq x_0)\right]g_1(p)g_1(1-p)$ since $p < 1-p$. For simplicity let $\IF_p(X)=\IF(X;\Q_p,F_1)$.  Then, from above and Theorem \ref{th:IFRp} and noting that, for example, $E_{F_1}[I(x_p\geq X)]=p$, for $X\sim F_1$,
\begin{align*}
E_{F_1}\left[\PIF_1^2\right] = & \frac{4\rho_p^2}{(x_{1-p} - x_p)^2}\Big\{E\left[\IF^2_{1-p}(X)\right] + E\left[\IF^2_{p}(X)\right]- 2E\left[\IF_{1-p}(X)\IF_{p}(X)\right]\Big\}\\
=&\frac{4p\rho_p^2}{(x_{1-p} - x_p)^2}\left\{g_1^2(p) + g_1^2(1-p) - p \left[g_1(p) + g_1(1-p)\right]^2\right\}.
\end{align*}
$E_{F_2}\left[\PIF_2^2\right]$ is derived similarly and the asymptotic variance follows by applying \eqref{asv:RPIF}.

\bibliographystyle{authordate4}
\bibliography{ref}

\end{document}